\theoremstyle{plain}
\newtheorem{thm}{Theorem}
\newtheorem{lem}[thm]{Lemma}
\newtheorem{prop}[thm]{Proposition}
\newtheorem{cor}[thm]{Corollary}
\theoremstyle{definition}
\newtheorem{prob}{Problem}
\newtheorem{conj}{Conjecture}
\newtheorem{rem}{Remark}
\newtheorem{fact}{Fact}
\newproof{pf}{Proof}
\newproof{pot}{Proof of Theorem \ref{main theorem}}
\begin{document}

\begin{frontmatter}

\title{A transformation that preserves  principal minors of skew-symmetric matrices }

\author[B]{Abderrahim  Boussa\"{\i}ri\corref{cor1}}
\ead{aboussairi@hotmail.com}

\author[B]{Brahim Chergui}
\ead{cherguibrahim@gmail.com}

\cortext[cor1]{Corresponding author}

\address[B]{Facult\'e des Sciences A\"{\i}n Chock, D\'epartement de
Math\'ematiques et Informatique,  Laboratoire de Topologie, Alg\`{e}bre, G\'{e}om\'{e}trie et Math\'{e}matiques discr\`{e}tes

Km 8 route d'El Jadida,
BP 5366 Maarif, Casablanca, Maroc}

\begin{abstract}
 Our motivation comes from the work of Engel and Schneider (1980). Their main
theorem implies that two symmetric matrices have equal corresponding principal
minors of all orders if and only if they are diagonally similar. This study
was continued by Hartfiel and Loewy (1984). They found sufficient conditions
under which two $n\times n$ matrices\ $A$ and $B$ have equal corresponding
principal minors of all orders if and only if  $B$ or its transpose
$B^{t}$ is diagonally similar to $A$. In this paper, we give a new way to
construct a pair of skew-symmetric having equal corresponding principal minors
of all orders.

\end{abstract}

\begin{keyword}

Skew-symmetric matrix; Principal minor; Diagonal similarity; Graph; digraph; Orientation.

\MSC 15A15, 05C50
\end{keyword}

\end{frontmatter}


\section{Introduction}

Throughout this paper, all matrices are real or complex. The identity matrix
of order $n$ is denoted by $I_{n}$ and the transpose of a matrix $A$ by
$A^{t}$. A \emph{minor} of a matrix $A$ is the determinant of a square
submatrix of $A$, and the determinant of a principal submatrix is a\emph{
principal minor}. The \emph{order} of a minor is $k$ if it is the determinant
of a $k\times k$ submatrix.

In this work, we consider the following Problem.

\begin{prob}
\label{problem principal}What is the relationship between two matrices having
equal corresponding principal minors of all orders ?
\end{prob}

For symmetric matrices, this Problem has been solved by Engel and Schneider
\cite{Engel}. More precisely, it follows from their work (see Theorem 3.5)
that two symmetric matrices $A$, $B$ have equal corresponding principal minors
of all orders if and only if there exists a  $\{-1,1\}$ diagonal matrix $D$  such that $B=D^{-1}AD$.

Consider now two arbitrary $n\times n$ matrices $A$ and $B$. We say that $A$,
$B$ are \emph{diagonally similar up to transposition} if there exists a
nonsingular diagonal matrix $D$ such that $B=D^{-1}AD$ or $B^{t}=D^{-1}AD$.
Clearly, diagonal similarity up to tansposition preserves all principal
minors. But, as observed in \cite{Engel} and \cite{HL} (see Remark
\ref{remaque base} below), this is not, in general, the unique way to
construct a pair of matrices having equal principal minors.

\begin{rem}
\label{remaque base} Consider the following skew-symmetric matrices :
\[
A:=\left(
\begin{array}
[c]{cc}%
A_{11} & A_{12}\\
-A_{12}^{t} & A_{22}%
\end{array}
\right)  \text{ and }B:=\left(
\begin{array}
[c]{cc}%
-A_{11} & A_{12}\\
-A_{12}^{t} & A_{22}%
\end{array}
\right)
\]

where $A_{11}$, $A_{22}$ are square matrices.

We will see in Proposition \ref{inversion and determinant} that if rank
$A_{12}\leq1$, then $A$ and $B$ have equal corresponding principal minors of
all orders. However, these matrices are not always diagonally similar up to transposition.
\end{rem}

Hartfiel and Loewy \cite{HL}, and then Loewy \cite{Lw86} considered a class of
matrices excluding the situation of the previous Remark. Their work concerns
irreducible matrices with an additional condition. In order to state the main
theorem of Loewy \cite{Lw86}, we need the following definitions and notations.
Let $A=[a_{ij}]$ be an $n\times n$ matrix and let $X,Y$ \ be two nonempty
subsets of $\left[  n\right]  $ (where $\left[  n\right]  :=\left \{
1,\ldots,n\right \}  $). We denote by $A[X,Y]$ the submatrix of $A$ having row
indices in $X$ and column indices in $Y$. If $X=Y$, then $A[X,X]$ is a
principal submatrix of $A$ and we abbreviate this to $A[X]$. A square matrix
$A$ is \emph{irreducible }if there exists no permutation matrix $P$, so that
$A$ can be reduced to the form $PAP^{T}=\left(
\begin{array}
[c]{cc}%
X & Z\\
0 & Y
\end{array}
\right)  $ where $X$ and $Y$ are square matrices.

The main theorem of Loewy \cite{Lw86} is stated as follows.

\begin{thm}
\label{loewy copy} Let $A,$ $B$ be two $n\times n$ matrices. Suppose $n\geq4$,
$A$ irreducible and for every partition of $\left[  n\right]  $ into two
subsets $X,Y$\ with $\left \vert X\right \vert \geq2$, $\left \vert Y\right \vert
\geq2$, either rank $A[X,Y]\geq2$ or rank $A[Y,X]\geq2$. If $A$ and $B$ have
equal corresponding principal minors of all orders, then they are diagonally
similar up to transposition.
\end{thm}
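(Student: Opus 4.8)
The plan is to distil, from equal principal minors of the first few orders, a small set of polynomial invariants of the entries, and then to upgrade a \emph{local} (triangle-by-triangle) matching of these invariants to a \emph{global} diagonal similarity, after possibly transposing $B$.

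First I would exploit orders $1,2,3$. Order $1$ gives $a_{ii}=b_{ii}$ for every $i$. Feeding this into the order-$2$ minor on $\{i,j\}$ yields
\[
a_{ij}a_{ji}=b_{ij}b_{ji}\qquad(i\neq j),
\]
so every $2$-cycle product is preserved. Expanding the order-$3$ minor on $\{i,j,k\}$ and discarding the terms already fixed by the two previous steps leaves
\[
a_{ij}a_{jk}a_{ki}+a_{ik}a_{kj}a_{ji}=b_{ij}b_{jk}b_{ki}+b_{ik}b_{kj}b_{ji}.
\]
Since the order-$2$ relations also fix the product $(a_{ij}a_{ji})(a_{jk}a_{kj})(a_{ki}a_{ik})$ of the two oriented triangle weights, the \emph{unordered pair} $\{\,a_{ij}a_{jk}a_{ki},\,a_{ik}a_{kj}a_{ji}\,\}$ agrees with that of $B$ for every triple. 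Thus on each triangle $B$ either \textbf{keeps} or \textbf{swaps} the two cyclic weights of $A$; globalising this binary choice is the whole game.

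Next I would recast the target graph-theoretically. Let $G$ be the support digraph of $A$; irreducibility means $G$ is strongly connected. A standard fact is that, for strongly connected $G$ and matrices with equal diagonals, $B=D^{-1}AD$ for a nonsingular diagonal $D$ if and only if $A$ and $B$ have equal products along every directed cycle (the $d_i$ are then recovered by solving $d_j/d_i=b_{ij}/a_{ij}$ along a spanning structure, the triangle equalities being exactly the required cocycle conditions). To reach the cycle-product condition from triangle data I would use that whenever a directed cycle has a chord $\{i_s,i_t\}$ with $a_{i_s i_t}a_{i_t i_s}\neq0$, its product equals the product of two strictly shorter cycle products divided by $a_{i_s i_t}a_{i_t i_s}$; iterating peels every chorded cycle down to triangles and $2$-cycles. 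If on every triangle the choice is ``keep'', this yields $B=D^{-1}AD$; if it is ``swap'' everywhere, then $B$ matches $A^{t}$ on all triangle and $2$-cycle data, giving $B=D^{-1}A^{t}D$ and hence $B^{t}$ diagonally similar to $A$.

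The heart of the proof, and the step I expect to be the main obstacle, is showing the keep/swap choice is forced to be constant. A priori it may vary, and Remark~\ref{remaque base} exhibits matrices where it genuinely does --- there an off-diagonal block has rank $\leq1$. I would argue by contradiction: a non-constant assignment should produce a partition of $[n]$ separating a ``kept'' region from a ``swapped'' one, and I would show this inconsistency forces both blocks $A[X,Y]$ and $A[Y,X]$ of some balanced partition ($|X|,|Y|\geq2$, possible since $n\geq4$) to have rank $\leq1$, contradicting the hypothesis. Two delicate points remain: the zero entries of $A$, where triangle weights vanish and carry no orientation data, so that strong connectivity must be invoked to route around them and to supply the chords needed above; and verifying that the single global choice indeed assembles into one diagonal $D$. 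Combining the rank condition (to force coherence) with irreducibility (to propagate it and to solve for $D$) is what I anticipate being the genuinely hard and technical part.
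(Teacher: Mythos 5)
The paper does not prove this statement: Theorem~\ref{loewy copy} is quoted verbatim as the main theorem of Loewy \cite{Lw86} and is used as background, so there is no in-paper proof to compare against. Judged on its own, your outline correctly extracts the standard invariants from minors of orders $1$--$3$ (preserved diagonal, preserved $2$-cycle products, and hence the preserved \emph{unordered pair} of the two cyclic triangle weights, since both their sum and their product are fixed), and it correctly identifies the target as the cycle-product criterion for diagonal similarity of irreducible matrices. This is indeed the general shape of the Hartfiel--Loewy/Loewy argument.

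However, there is a genuine gap at exactly the point you flag as ``the heart of the proof.'' The keep/swap datum is attached to $3$-element subsets, not to vertices, so a non-constant assignment does not by itself ``produce a partition of $[n]$''; turning a disagreement between two triangles into a partition $X,Y$ with $|X|,|Y|\geq 2$ on which \emph{both} $A[X,Y]$ and $A[Y,X]$ have rank $\leq 1$ is the entire content of the theorem, and your sketch offers no mechanism for it. The difficulty is compounded by two issues you mention but do not resolve: (a) triangles whose cyclic products vanish (or whose two cyclic products coincide) carry no keep/swap information, so the assignment is only partial and cannot be ``propagated'' without an argument; and (b) nothing in orders $1$--$3$ identifies the support of $B$ with that of $A$ or $A^{t}$, which is needed before the cycle-product criterion applies. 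Finally, your reduction of long cycles to triangles requires a chord $\{i_s,i_t\}$ with $a_{i_si_t}a_{i_ti_s}\neq 0$ at every step; chordless directed cycles of length $\geq 4$ are not excluded by the hypotheses, and comparing their cycle products forces the use of principal minors of order $\geq 4$, which your argument never invokes. So the proposal is a plausible roadmap, but the step that makes the theorem true (rank condition $\Rightarrow$ global coherence of the transpose choice, including the degenerate and zero-entry cases) is asserted rather than proved.
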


For skew-symmetric matrices with no zeros off the diagonal, we have improved
this theorem in \cite{BOCH} by considering only the principal minors of order
at most $4$.

We will describe now another way to construct a pair of skew-symmetric
matrices having equal corresponding principal minors of all orders. Let
$A=\left[  a_{ij}\right]  $ be a $n\times n$ matrix. Following \cite{BOCH}, a
subset $X$ of $\left[  n\right]  $ is a \emph{HL-clan} of $A$ if both of
matrices $A\left[  X,\overline{X}\right]  $ and $A\left[  \overline
{X},X\right]  $ have rank at most $1$ (where $\overline{X}:=\left[  n\right]
\setminus X$). By definition, $\emptyset$, $\left[  n\right]  $ and singletons
are HL-clans. Consider now the particular case when $A$ is skew-symmetric and
let $X$ be a subset of $\left[  n\right]  $. We denote by $Inv(X,A):=[t_{ij}]$
the matrix obtained from $A$ as follows. For any $i,j\in \left[  n\right]  $,
$t_{ij}=-a_{ij}$ if $i,j\in X$ and $t_{ij}=a_{ij}$, otherwise. As we have
mentioned in Remark \ref{remaque base}, if $X$ is an HL-clan of $A$, then
$Inv(X,A)$ and $A$ have equal corresponding principal minors of all orders.
More generally, let $A$ and $B$ two skew-symmetric matrices and assume that
there exists a sequence $A_{0}=A,\ldots,A_{m}=B$ of $n\times n$ skew-symmetric
matrices such that for $k=0,\ldots,m-1$, $A_{k+1}=Inv(X_{k},A_{k})$ where
$X_{k}$ is a HL-clan of $A_{k}$. It easy to see that $A$ and $B$ have equal
corresponding principal minors. Two matrices $A,B$ obtained in this way are
called \emph{HL-clan-reversal-equivalent}. This defines an equivalence
relation between $n\times n$ skew-symmetric matrices which preserves principal
minors. In the converse direction, we propose the following conjecture.

\begin{conj}
\label{conjecture}Two $n\times n$ skew-symmetric real matrices have equal
corresponding principal minors of all order if and only if they are HL-clan-reversal-equivalent.
\end{conj}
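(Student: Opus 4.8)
The forward implication — that HL-clan-reversal-equivalent matrices share all principal minors — is already recorded in the discussion preceding the conjecture, so the entire content lies in the converse: if two skew-symmetric matrices $A,B$ have equal corresponding principal minors of all orders, then one passes from $A$ to $B$ by a finite sequence of HL-clan reversals. The plan is to induct on $n$, splitting according to whether the hypotheses of Loewy's theorem (Theorem \ref{loewy copy}) are available. First I would record a structural observation that makes that theorem usable here: for a skew-symmetric matrix the complement of any singleton is automatically an HL-clan, since $A[[n]\setminus\{v\},\{v\}]$ is a single column and $A[\{v\},[n]\setminus\{v\}]$ a single row, both of rank $\le 1$; moreover $[n]$ itself is an HL-clan with $Inv([n],A)=-A=A^{t}$. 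A $\{-1,1\}$ diagonal similarity $A\mapsto D^{-1}AD$ flips exactly the entries across the cut $(S,\overline S)$ with $S=\{k:d_{k}=-1\}$, and flipping the cut at a single vertex $v$ equals $Inv([n],\cdot)$ followed by $Inv([n]\setminus\{v\},\cdot)$; hence every $\{-1,1\}$ diagonal similarity, and its composition with transposition, is a sequence of HL-clan reversals. Thus diagonal similarity up to transposition is, for skew-symmetric matrices, a special case of HL-clan-reversal-equivalence.

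Prime case. Suppose $A$ is irreducible and every partition of $[n]$ into $X,Y$ with $|X|,|Y|\ge 2$ has $\operatorname{rank}A[X,Y]\ge 2$ or $\operatorname{rank}A[Y,X]\ge 2$; equivalently, $A$ has no HL-clan $X$ with $2\le|X|\le n-2$. Theorem \ref{loewy copy} then produces a nonsingular diagonal $D$ with $B=D^{-1}AD$ or $B^{t}=D^{-1}AD$. Comparing order-$2$ principal minors gives $b_{ij}^{2}=a_{ij}^{2}$, so $(d_{j}/d_{i})^{2}=1$ whenever $a_{ij}\neq 0$; as the support graph of an irreducible skew-symmetric matrix is connected, $D$ is a scalar multiple of a $\{-1,1\}$ diagonal matrix. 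By the previous paragraph, $A$ and $B$ are then HL-clan-reversal-equivalent, so this case is essentially complete.

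Inductive step. If Loewy's hypotheses fail then, since a reducible skew-symmetric matrix is a direct sum (a vanishing off-diagonal block forces its transpose block to vanish), one checks that there is always a nontrivial HL-clan $X$ with $2\le|X|\le n-2$ — in the direct-sum case with an isolated vertex $v$ one takes $\{v,w\}$, which is a clan because one of its rows is zero. I would first show that the family of HL-clans is \emph{partitive}, so that it carries a canonical decomposition tree as in the modular decomposition of $2$-structures. Restricting to subsets of $X$ and of $\overline X$ shows that $A[X],B[X]$ and $A[\overline X],B[\overline X]$ again have equal principal minors of all orders, so the induction hypothesis applies to the blocks; the remaining task is to lift the block reversal sequences to the full matrix and to reconcile the quotient obtained by contracting each clan, using that the coupling $A[X,\overline X]=pq^{t}$ has rank $\le 1$.

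The main obstacle is precisely this reconciliation, and it has two faces. The combinatorial face is that $X$ need not be an HL-clan of $B$, so equality of principal minors must be leveraged to prove that $A$ and $B$ admit a canonically matched decomposition tree before the induction can be deployed; this is where the full strength of ``all orders'' is expected to enter. The algebraic face is the lifting problem: a clan $Y\subseteq X$ of the block $A[X]$ need not remain a clan of $A$, because $A[Y,\overline X]$ superposed on the internal interaction may reach rank $2$. Controlling this requires exploiting the proportionality of the external rows of $X$ forced by $\operatorname{rank}A[X,\overline X]\le 1$, and showing that the reversals produced on the quotient and on the blocks can be interleaved consistently. I expect the prime case to go through cleanly via Theorem \ref{loewy copy}, with the genuine difficulty residing in establishing the partitive structure and the compatibility of the two decomposition trees.
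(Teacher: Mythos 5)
The statement you are proving is stated in the paper as a \emph{conjecture}: the authors do not prove it, and the paper's actual theorem (Theorem \ref{main theorem}) establishes only the special case of $\{-1,0,1\}$-matrices whose first row has no off-diagonal zeros, by an entirely different route (normalizing the first row by a diagonal similarity, passing to orientations of the underlying graph, and invoking the hemimorphy/clan-decomposition result of \cite{BILT} via Lemma \ref{det4impliq3hem}). Your prime case is sound: when $A$ is irreducible and has no HL-clan $X$ with $2\le|X|\le n-2$, Theorem \ref{loewy copy} plus the order-$2$ minors and Proposition \ref{DSTimpliHLRE} do give HL-clan-reversal-equivalence. But the rest of your argument is a plan, not a proof, and you say so yourself: you leave open (a) that $X$ being an HL-clan of $A$ forces a matching decomposition of $B$, and (b) the lifting of reversal sequences from $A[X]$ and $A[\overline X]$ back to $A$. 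These are not technical loose ends to be tidied later --- they are exactly the obstructions that make the statement a conjecture, and no argument is offered for either.

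There is also a concrete false step: the family of HL-clans of a skew-symmetric matrix is \emph{not} partitive in general. Take $n=5$ with vertex $2$ isolated (row and column $2$ of $A$ zero) and choose the remaining entries so that the rows of $A\bigl[\{1,3\},\{4,5\}\bigr]$ are linearly independent. Then $X=\{1,2\}$ and $Y=\{2,3\}$ are overlapping HL-clans (each of $A[X,\overline X]$ and $A[Y,\overline Y]$ has a zero row, hence rank $\le 1$), yet $X\cup Y=\{1,2,3\}$ is not an HL-clan since $A\bigl[X\cup Y,\{4,5\}\bigr]$ has rank $2$. So the ``canonical decomposition tree'' on which your induction rests does not exist without further hypotheses; HL-clans satisfy a weaker (rank-$\le 1$) condition than the digraph clans of \cite{EHR ROZ}, and the closure properties of the latter do not transfer. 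To salvage the strategy you would at minimum need to restrict to a subfamily of HL-clans that is provably partitive, and then still supply the two missing reconciliation arguments. As it stands the proposal does not prove the statement, nor does it recover the paper's partial result.
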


We will restrict ourselves to the class $\mathcal{M}_{n}$ of $n\times n$
skew-symmetric matrices with entries from $\{-1,0,1\}$ and such that all
off-diagonal entries of the first row are nonzero. We obtain the following
Theorem, which is a partial answer to the conjecture above.

\begin{thm}
\label{main theorem} Let $A,B\in \mathcal{M}_{n}$. Then, the following
statements are equivalent:

\begin{description}
\item[i)] $A$ and $B$ have equal corresponding principal minors of order $4$;

\item[ii)] $A$ and $B$ have equal corresponding principal minors of all orders;

\item[iii)] $A$ and $B$ are HL-clan-reversal-equivalent.
\end{description}
\end{thm}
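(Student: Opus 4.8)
The plan is to prove the cycle of implications $(ii)\Rightarrow(i)\Rightarrow(iii)\Rightarrow(ii)$, of which two are essentially free. The implication $(ii)\Rightarrow(i)$ is trivial, since the minors of order $4$ are among the minors of all orders. The implication $(iii)\Rightarrow(ii)$ has already been observed in the text preceding the statement: each elementary move $A_{k+1}=Inv(X_{k},A_{k})$ with $X_{k}$ an HL-clan of $A_{k}$ preserves every principal minor, so HL-clan-reversal-equivalent matrices share all principal minors. Thus the entire content lies in the implication $(i)\Rightarrow(iii)$, where I would concentrate the whole argument.

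First I would recover the underlying graph. Because $A$ is skew-symmetric, all principal minors of odd order vanish and the order-$2$ minor on $\{i,j\}$ equals $a_{ij}^{2}$; so "having the same graph'' means $a_{ij}^{2}=b_{ij}^{2}$ for all $i,j$. I would extract this from the order-$4$ data by probing each pair through the universal vertex $1$: for the $4$-set $\{1,i,j,k\}$ the order-$4$ minor is the square of the Pfaffian $a_{1i}a_{jk}-a_{1j}a_{ik}+a_{1k}a_{ij}$, and since $a_{1i},a_{1j},a_{1k}\in\{-1,1\}$, an integrality/parity inspection of this square (the coefficient of $a_{ij}$ is the unit $\pm a_{1k}$) detects whether each edge is present. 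Ranging over all auxiliary vertices $k$ forces $A$ and $B$ to have the same support graph $G$, which is connected because vertex $1$ is adjacent to everything.

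With the graph fixed, I would turn to the signs. Using vertex $1$ as a reference, I would read off from the $4$-sets containing $1$ exactly those quantities that the order-$4$ minors determine — the triangle products $a_{1i}a_{ij}a_{j1}$ and the four-cycle products — and observe that these, and only these, are the invariants of an order-$4$-minor-preserving change of signs. The remaining task is to show that any two sign patterns on $G$ sharing these invariants are joined by a sequence of HL-clan reversals. For this I would give a combinatorial description of the HL-clans of a matrix in $\mathcal{M}_{n}$: since for skew-symmetric $A$ one has $A[\overline{X},X]=-A[X,\overline{X}]^{t}$, a set $X$ is an HL-clan exactly when the signed cut matrix $A[X,\overline{X}]$ has rank at most $1$, a condition I would translate into a rank-one (combinatorial rectangle) pattern on the cut, made especially rigid by the universal vertex. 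I would then argue by induction on $n$, deleting a suitable vertex while staying in $\mathcal{M}_{n-1}$ (or keeping vertex $1$ universal), and use the clan structure at vertex $1$ to realize the discrepancy between $A$ and $B$ as an explicit product of the moves $Inv(X_{k},\cdot)$.

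The main obstacle is this last step: proving \emph{completeness} of the HL-clan-reversal moves, i.e.\ that every sign discrepancy compatible with equal order-$4$ minors is genuinely generated by clan reversals, not merely consistent with the invariants. In particular one must reconcile the two distinct minor-preserving operations — clan reversals (which flip triangle signs across cuts of even intersection) and $\{-1,1\}$-diagonal similarity (which fixes all triangle products) — and show that, within $\mathcal{M}_{n}$, the latter is subsumed by compositions of the former. I expect this to demand a careful analysis of how HL-clans sit across the cut at the universal vertex, and it is precisely here that the defining hypotheses of $\mathcal{M}_{n}$, namely the $\{-1,0,1\}$ integrality and the universal first vertex, are used most heavily.
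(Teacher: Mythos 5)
Your treatment of ii)$\Rightarrow$i) and iii)$\Rightarrow$ii) matches the paper, and your reduction of i)$\Rightarrow$iii) — normalize the first row to $(0,1,\dots,1)$ by a $\{-1,1\}$-diagonal similarity, pass to orientations of the underlying graph, and read the relevant local data off the $4$-sets through the universal vertex $1$ — is exactly the paper's set-up. But the step you yourself flag as ``the main obstacle'' is the entire mathematical content of the implication, and your proposal leaves it unproved. What the paper does at that point is \emph{not} an ad hoc induction: it verifies, via Lemma \ref{det4impliq3hem}, that equality of the order-$4$ minors on the sets $\{1,j,k,l\}$ forces $G^{\sigma}[j,k,l]$ and $G^{\tau}[j,k,l]$ to be hemimorphic for every $3$-set, and then invokes Proposition \ref{Biltdigraph} — a nontrivial theorem imported from \cite{BILT} on the $C_{3}$-structure of tournaments — which says precisely that two orientations of a graph hemimorphic on every $3$-subset are joined by a sequence of clan reversals. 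That is the ``completeness of the moves'' you are hoping to establish by induction on $n$; without either citing such a result or actually carrying out that induction (which would amount to reproving the theorem of \cite{BILT}), your argument has a hole exactly where the difficulty lives.

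A secondary gap: your recovery of the underlying graph from the order-$4$ minors does not work as sketched. The square of the Pfaffian of $A[\{1,i,j,k\}]$ only determines the number of edges of $G[\{i,j,k\}]$ modulo $2$ (together with distinguishing the values $4$ and $9$), and triangle edge-parities do not determine the support: the supports of two matrices in $\mathcal{M}_{n}$ can differ by a complete bipartite graph on $\{2,\dots,n\}$ without disturbing any of these parities, and for $n=4$ one can exhibit $A,B\in\mathcal{M}_{4}$ with $\det A=\det B=0$ but different supports. (The paper itself glosses over this point with the phrase ``by construction, $A'$ and $B'$ have the same underlying graph,'' so you are in good company, but your parity inspection should not be presented as settling it.) In short, the skeleton of your plan is the right one, but the two substantive claims it rests on — completeness of clan reversals and recovery of the support — are respectively a cited theorem you would need to prove and a claim that needs a genuinely finer argument.
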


\section{HL-clan-reversal-equivalence}

In this section, we present some properties of HL-clan-reversal-equivalence.
We start with the following basic facts. Let $A=[a_{ij}]$ be a skew-symmetric
$n\times n$ matrix.

\begin{fact}
\label{f1}If $D=[d_{ij}]$ is a nonsingular diagonal matrix then $A$ and
$D^{-1}AD$ have the same HL-clans.
\end{fact}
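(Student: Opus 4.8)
The plan is to reduce everything to the elementary fact that the rank of a matrix is unchanged when it is multiplied on either side by a nonsingular matrix. Writing $D=[d_{ij}]$ with diagonal entries $d_{11},\ldots,d_{nn}$, the entries of $B:=D^{-1}AD$ are $b_{ij}=d_{ii}^{-1}a_{ij}d_{jj}$.

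First I would fix a subset $X$ of $\left[n\right]$ and examine the off-diagonal block $B[X,\overline{X}]$. Since conjugation by $D$ merely rescales the $(i,j)$ entry by $d_{ii}^{-1}d_{jj}$, this block factors as $B[X,\overline{X}]=D[X]^{-1}\,A[X,\overline{X}]\,D[\overline{X}]$, where $D[X]$ and $D[\overline{X}]$ denote the principal (diagonal) submatrices of $D$ on the index sets $X$ and $\overline{X}$. Both of these are nonsingular diagonal matrices because $D$ is nonsingular.

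Next, since multiplication by the nonsingular matrices $D[X]^{-1}$ on the left and $D[\overline{X}]$ on the right preserves rank, I obtain $\operatorname{rank} B[X,\overline{X}]=\operatorname{rank} A[X,\overline{X}]$. Applying the same reasoning to the complementary block, which factors as $B[\overline{X},X]=D[\overline{X}]^{-1}\,A[\overline{X},X]\,D[X]$, gives $\operatorname{rank} B[\overline{X},X]=\operatorname{rank} A[\overline{X},X]$. Consequently, $X$ satisfies the defining rank-at-most-$1$ conditions of an HL-clan for $B$ exactly when it does for $A$, so $A$ and $D^{-1}AD$ have the same HL-clans.

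Since the verification is a direct rank computation, I expect no substantial obstacle; the only point requiring care is the bookkeeping of the block factorization, namely confirming that conjugation by a diagonal matrix carries the off-diagonal block $A[X,\overline{X}]$ to $D[X]^{-1}\,A[X,\overline{X}]\,D[\overline{X}]$ without mixing the two index sets. Note also that skew-symmetry of $A$ plays no role here, as the argument rests solely on the invariance of rank under diagonal scaling.
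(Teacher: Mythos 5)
Your proof is correct and follows essentially the same route as the paper: factor each off-diagonal block as $D[X]^{-1}\,A[X,\overline{X}]\,D[\overline{X}]$ (and its counterpart), then invoke invariance of rank under multiplication by nonsingular matrices. No issues.
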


\begin{proof}
Let $X$ be a subset of $[n]$. We have the following equalities:
\begin{align*}
(D^{-1}AD)\left[  \overline{X},X\right]   &  =(D^{-1}\left[  \overline
{X}\right]  )(A\left[  \overline{X},X\right]  )(D\left[  X\right]  )\\
(D^{-1}AD)\left[  X,\overline{X}\right]   &  =(D^{-1}\left[  X\right]
)(A\left[  X,\overline{X}\right]  )(D\left[  \overline{X}\right]  )
\end{align*}
But, the matrices $D\left[  X\right]  $ and $D\left[  \overline{X}\right]  $
are nonsingular, then $(D^{-1}AD)\left[  \overline{X},X\right]  $ and
$A\left[  \overline{X},X\right]  $ (resp. $(D^{-1}AD)\left[  X,\overline
{X}\right]  $ and $(A\left[  X,\overline{X}\right]  )$ have the same rank.
Therfore, $A$ and $D^{-1}AD$ have the same HL-clans.
\end{proof}

\begin{fact}
\label{f2}If $C$ be an HL-clan of $A$ then it is an HL-clan of $Inv(C,A)$.
\end{fact}

It suffices to see that
\begin{align*}
A\left[  C,\overline{C}\right]   &  =Inv(C,A)\left[  C,\overline{C}\right] \\
A\left[  \overline{C},C\right]   &  =Inv(C,A)\left[  \overline{C},C\right]
\end{align*}

\begin{fact}
\label{f3}If $C$ be an HL-clan of $A$ and $X$ is a subset of $\left[
n\right]  $, then $C\cap X$ is an HL-clan of $A[X]$ and $Inv(C,A)[X]=Inv(C\cap
X,A[X])$.
\end{fact}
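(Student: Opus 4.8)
The plan is to verify both assertions directly from the definitions, splitting the statement into its two parts and keeping careful track of where complements are taken.

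First I would handle the HL-clan claim. Working inside the principal submatrix $A[X]$, the ground set is $X$, so the complement of $C\cap X$ must be read relative to $X$, namely $X\setminus C = X\cap\overline{C}$ (as opposed to $\overline{C}$, which is taken in $[n]$). I would then record the identifications
\begin{align*}
(A[X])[C\cap X,\,X\cap\overline{C}] &= A[C\cap X,\,X\cap\overline{C}],\\
(A[X])[X\cap\overline{C},\,C\cap X] &= A[X\cap\overline{C},\,C\cap X].
\end{align*}
The right-hand sides are submatrices of $A[C,\overline{C}]$ and $A[\overline{C},C]$ respectively, obtained by retaining only the rows and columns whose indices also lie in $X$. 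Since passing to a submatrix cannot increase the rank, and since $C$ is an HL-clan of $A$ (so $A[C,\overline{C}]$ and $A[\overline{C},C]$ both have rank at most $1$), both displayed blocks have rank at most $1$. This is precisely the condition for $C\cap X$ to be an HL-clan of $A[X]$.

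Next I would verify the identity $Inv(C,A)[X]=Inv(C\cap X,A[X])$ entrywise. Fixing $i,j\in X$, I note that forming the principal submatrix on $X$ leaves entries indexed within $X$ unchanged, so $(A[X])_{ij}=a_{ij}$. Hence the $(i,j)$ entry of $Inv(C,A)[X]$ equals $-a_{ij}$ when $i,j\in C$ and $a_{ij}$ otherwise, while the $(i,j)$ entry of $Inv(C\cap X,A[X])$ equals $-a_{ij}$ when $i,j\in C\cap X$ and $a_{ij}$ otherwise. Because $i$ and $j$ already lie in $X$, the conditions $i,j\in C$ and $i,j\in C\cap X$ are equivalent, so the two entries agree for every $i,j\in X$, giving equality of the matrices.

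Since both claims reduce to bookkeeping of index sets together with the elementary fact that a submatrix has rank no larger than the matrix containing it, I do not expect a genuine obstacle. The only point requiring care is the one flagged above: keeping the complement $\overline{C}$ taken in $[n]$ distinct from the complement of $C\cap X$ taken in $X$, so that the submatrix identifications are stated correctly.
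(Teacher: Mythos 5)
Your proof is correct and follows essentially the same route as the paper: both arguments observe that $(A[X])[C\cap X,\,X\setminus(C\cap X)]$ and $(A[X])[X\setminus(C\cap X),\,C\cap X]$ are submatrices of $A[C,\overline{C}]$ and $A[\overline{C},C]$ respectively, so their ranks are at most $1$, and both treat the identity $Inv(C,A)[X]=Inv(C\cap X,A[X])$ as an immediate entrywise check (the paper simply calls it trivial, whereas you spell it out).
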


\begin{proof}
We have rank $(A\left[  C\cap X,X\setminus(C\cap X)\right]  )\leq$rank
$(A\left[  C,\overline{C}\right]  )\leq1$ because $A\left[  C\cap
X,X\setminus(C\cap X)\right]  $ is a submatrix of $A\left[  C,\overline
{C}\right]  $ and $C$ is an HL-clan of $A$. Analougsly, we have rank
$(A\left[  X\setminus(C\cap X),C\cap X\right]  )\leq$rank $(A\left[
\overline{C},C\right]  )\leq1$. It follows that $C\cap X$ is an HL-clan of
$A[X]$. The second statement is trivial.
\end{proof}

The next Proposition states that HL-clan-reversal-equivalence generalizes
diagonal similarity up to transposition.

\begin{prop}
\label{DSTimpliHLRE} Let $A=[a_{ij}]$ and $B=[b_{ij}]$ be two $n\times n$
skew-symmetric matrices. If $A$ and $B$ are diagonally similar up to
transposition then they are HL-clan-reversal-equivalent.
\end{prop}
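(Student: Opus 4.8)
The plan is to reduce a general diagonal similarity up to transposition to a product of single-vertex sign changes, each of which I realize by a short sequence of HL-clan reversals. Two preliminary reductions come first. Transposition is harmless: since $A$ is skew-symmetric, $A^{t}=-A=Inv([n],A)$ and $[n]$ is an HL-clan (the cut blocks $A[[n],\emptyset]$ and $A[\emptyset,[n]]$ are empty, hence of rank $0$); so in the case $B^{t}=D^{-1}AD$ I would instead work with $C:=D^{-1}AD=-B$, which is skew-symmetric, prove that $C$ is HL-clan-reversal-equivalent to $A$, and then conclude from $B=Inv([n],C)$ together with the fact that $[n]$ is an HL-clan of $C$. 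Hence it suffices to treat $B=D^{-1}AD$ with $D$ nonsingular diagonal. Next I would normalize $D$: writing $b_{ij}=(d_j/d_i)a_{ij}$ and imposing skew-symmetry of $B$ forces $d_i^{2}=d_j^{2}$ whenever $a_{ij}\neq 0$. Propagating this along the support graph of $A$ (vertices $[n]$, edges the nonzero off-diagonal positions) shows that on each connected component the $d_i$ agree up to sign, so choosing a base vertex per component yields a $\{-1,1\}$-diagonal matrix $E=\mathrm{diag}(\eta_i)$ with $E^{-1}AE=D^{-1}AD=B$. Thus I may assume $D=E$ is a sign matrix.

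The heart of the argument is the realization of a single sign flip. For a vertex $v$, let $E_v$ be the $\{-1,1\}$-diagonal matrix changing the sign only at $v$; conjugation by $E_v$ negates exactly the entries $a_{ij}$ incident to $v$ (those with $v\in\{i,j\}$ and $i\neq j$). I claim $E_v^{-1}AE_v=Inv(\overline{\{v\}},Inv([n],A))$: applying $Inv([n],\cdot)$ negates everything, and then $Inv(\overline{\{v\}},\cdot)$ negates again exactly the entries with both indices different from $v$, leaving precisely the entries incident to $v$ negated. The \emph{key point} is that both sets used are HL-clans. Indeed $[n]$ is an HL-clan of $A$, and $\overline{\{v\}}$ is an HL-clan of $Inv([n],A)=-A$: a singleton is trivially an HL-clan (its cut blocks are a single row and a single column, of rank at most $1$), the HL-clan condition is symmetric in $X$ and $\overline{X}$ so the complement of an HL-clan is again an HL-clan, and negation does not affect the ranks of submatrices so the HL-clans of $-A$ are those of $A$. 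Hence a single-vertex sign flip is a composition of two HL-clan reversals and therefore preserves HL-clan-reversal-equivalence.

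Finally I would assemble the pieces. With $S=\{i:\eta_i=-1\}$ one has $E=\prod_{v\in S}E_v$, and since the $E_v$ commute and are self-inverse, iterating the single-vertex construction produces a chain $A=A^{(0)},A^{(1)},\dots,A^{(|S|)}=E^{-1}AE=B$ in which each $A^{(\ell)}$ is obtained from $A^{(\ell-1)}$ by two HL-clan reversals applied to the current matrix, which stays skew-symmetric because $Inv$ preserves skew-symmetry. Transitivity of the equivalence relation then gives that $A$ and $B$ are HL-clan-reversal-equivalent. I expect the main obstacle to be the single-vertex realization just described: the operator $Inv(X,\cdot)$ only negates a ``clique'' pattern supported on a set $X$, whereas a vertex sign flip negates a ``star'' pattern at $v$, and reconciling the two is exactly what the identity $E_v^{-1}AE_v=Inv(\overline{\{v\}},Inv([n],A))$ plus the observation that complements of singletons are automatically HL-clans achieves. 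By comparison, the component normalization of $D$ is routine once the constraint $d_i^{2}=d_j^{2}$ across edges is noticed, and the transposition reduction is immediate.
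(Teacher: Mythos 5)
Your proof is correct and follows essentially the same route as the paper: reduce the transposition case via $Inv([n],\cdot)$, normalize $D$ to a $\{-1,1\}$-diagonal matrix, and realize each single-vertex sign flip as $Inv([n]\setminus\{v\},Inv([n],\cdot))$, which is exactly the construction in the paper's Lemma~\ref{general inversion}. Your component-wise justification that $D$ can be replaced by a sign matrix is merely a more detailed version of the step the paper dismisses as ``easy to see.''
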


\begin{proof}
Let $A=[a_{ij}]$ and $B=[b_{ij}]$ be two $n\times n$ skew-symmetric matrices
diagonally similar up to transposition. As $B^{t}=-B=Inv([n],B)$, we can
assume that $B=\Delta^{-1}A\Delta$ for some nonsingular diagonal matrix
$\Delta$. It is easy to see that $b_{ij}=\pm a_{ij}$ for $i,j\in \lbrack n]$
and hence $\Delta$ may be chosen to be a $\left \{  -1,1\right \}  $-diagonal
matrix. We conclude by Lemma \ref{general inversion} below.
\end{proof}

\begin{lem}
\label{general inversion} Let $A=[a_{ij}]$ be an $n\times n$ skew-symmetric
matrix and let $D$ be a $\left \{  -1,1\right \}  $-diagonal matrix. Then $A$
and $D^{-1}AD$ are HL-clan-reversal-equivalent.
\end{lem}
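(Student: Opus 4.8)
The plan is to reduce the statement to the case of a single sign flip and then invoke transitivity of HL-clan-reversal-equivalence. Write $D=\mathrm{diag}(d_{1},\dots,d_{n})$ with $d_{i}\in\{-1,1\}$ and set $S:=\{k:d_{k}=-1\}$. For $k\in[n]$ let $D_{k}$ be the $\{-1,1\}$-diagonal matrix with $-1$ in position $k$ and $1$ elsewhere. Since diagonal matrices commute and $D_{k}^{-1}=D_{k}$, we have $D=\prod_{k\in S}D_{k}$, so $D^{-1}AD=DAD$ is obtained from $A$ by conjugating successively by the $D_{k}$, $k\in S$. Because HL-clan-reversal-equivalence is an equivalence relation (in particular transitive), it therefore suffices to prove that for every $n\times n$ skew-symmetric matrix $M$ and every index $k$, the matrices $M$ and $D_{k}MD_{k}$ are HL-clan-reversal-equivalent.

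The key observation I would use is that conjugation by $D_{k}$ negates exactly row $k$ and column $k$ of $M$, and that this effect is realized by two successive HL-clan reversals. Indeed $Inv([n],M)=-M$, and I claim $D_{k}MD_{k}=Inv(\overline{\{k\}},Inv([n],M))$. This is checked entrywise: applying $Inv([n],\cdot)$ turns each entry $m_{ij}$ into $-m_{ij}$, and then applying $Inv(\overline{\{k\}},\cdot)$ restores the sign of every entry whose two indices both differ from $k$, leaving negated precisely the entries with exactly one index equal to $k$ (the diagonal entries being $0$). This is exactly the action of conjugation by $D_{k}$.

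The step I expect to require the most care is verifying that each reversed set is genuinely an HL-clan of the matrix to which it is applied. The set $[n]$ is an HL-clan of $M$ by definition, so the first reversal is legitimate. The crucial point is that $\overline{\{k\}}$ is an HL-clan of $-M$: its off-diagonal blocks $(-M)[\overline{\{k\}},\{k\}]$ and $(-M)[\{k\},\overline{\{k\}}]$ are a single column and a single row, hence automatically of rank at most $1$. This is precisely where the reduction to one coordinate pays off. The support $S$ itself is in general \emph{not} an HL-clan of $A$, since the blocks $A[S,\overline{S}]$ and $A[\overline{S},S]$ may have large rank, so one cannot reverse $S$ directly; flipping one index at a time replaces such a set by complements of singletons, whose blocks are automatically of rank at most $1$.

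Putting these together, $M$ and $D_{k}MD_{k}$ are joined by the two HL-clan reversals $M\mapsto Inv([n],M)\mapsto Inv(\overline{\{k\}},Inv([n],M))=D_{k}MD_{k}$, hence are HL-clan-reversal-equivalent. Finally I would apply this with $M=A^{(j-1)}$ along the chain $A^{(0)}=A$ and $A^{(j)}:=D_{k_{j}}A^{(j-1)}D_{k_{j}}$, where $S=\{k_{1},\dots,k_{m}\}$; telescoping gives $A^{(m)}=DAD=D^{-1}AD$, and transitivity shows that $A$ and $D^{-1}AD$ are HL-clan-reversal-equivalent, as required. The only genuine computations are the entrywise identity and the rank bound, both immediate; the conceptual content is the single-coordinate reduction that circumvents reversing the non-clan set $S$.
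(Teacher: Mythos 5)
Your proof is correct and follows essentially the same route as the paper's: both reduce to flipping one diagonal sign at a time and realize each single-index flip as the composition of the reversal $Inv([n],\cdot)$ followed by $Inv([n]\setminus\{k\},\cdot)$. The only differences are presentational (explicit telescoping over the support of $D$ versus induction on the number of $-1$ entries), and you spell out slightly more detail on why $[n]\setminus\{k\}$ is an HL-clan.
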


\begin{proof}
We denote by $d_{1},d_{2},\ldots,d_{n}$ the diagonal entries of $D$. Let
$U_{D}:=\left \{  i\in \lbrack n]:d_{i}=-1\right \}  $. We will show by induction
on $t:=\left \vert U_{D}\right \vert $ that there exists a sequence
$A_{0}=A,\ldots,A_{m}=D^{-1}AD$ of $n\times n$ skew-symmetric matrices such
that for $k=0,\ldots,m-1$, $A_{k+1}=Inv(X_{k},A_{k})$ where $X_{k}=\emptyset$,
$X_{k}=[n]$ or $[n]\setminus X_{k}$ is a singleton. If $t=0$ then $D^{-1}AD=A$
and hence it suffices to take $m=1$, $A_{0}=A$ and $X_{0}=\emptyset$. Now
assume that $t>0$. Let $j\in U_{D}$ and consider the diagonal matrix
$\Delta^{(j)}=diag(\delta_{1},\ldots,\delta_{n})$ where $\delta_{j}=-1$ and
$\delta_{i}=1$ if $i\neq j$. Clearly $n_{D\Delta^{(j)}}=t-1$ and then, by
induction hypothesis, there exists a sequence $A_{0}$ $=A,\ldots
,A_{m}=(D\Delta^{(j)})^{-1}AD\Delta^{(j)}$ of $n\times n$ skew-symmetric
matrices such that for $k=0,\ldots,m-1$, $A_{k+1}=Inv(X_{k},A_{k})$ where
$X_{k}=\emptyset$, $X_{k}=[n]$ or $[n]\setminus X_{k}$ is a singleton. To
prove that $A$ and $D^{-1}AD$ are HL-clan-reversal-equivalent, it suffices to
extend the sequence $A_{0}$ $=A,\ldots,A_{m}$ by adding two terms,
$A_{m+1}:=Inv([n],A_{m})$ and $A_{m+2}:=Inv([n]\setminus \left \{  j\right \}
,A_{m+1})$.
\end{proof}

The following Proposition appears in another form in \cite{HL} (see Lemma 5).

\begin{prop}
\label{inversion and determinant} Let $A=[a_{ij}]$ be a skew-symmetric
$n\times n$ matrix. If $X$ is an HL-clan of $A$ then $\det(Inv(X,A))=\det(A)$.
\end{prop}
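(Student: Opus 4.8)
The plan is to reduce everything to a single determinantal identity and then exploit the parity of $\left\vert X\right\vert$. Write $p:=\left\vert X\right\vert$. If $n$ is odd, both $A$ and $Inv(X,A)$ are skew-symmetric of odd order, so both determinants vanish and there is nothing to prove; hence assume $n$ even. Since $X$ is an HL-clan, $A\left[X,\overline{X}\right]$ has rank at most $1$, so we may write $A\left[X,\overline{X}\right]=uv^{t}$ for vectors $u,v$ supported on $X$ and $\overline{X}$ respectively (extended by zeros to the ambient space, so $u_{i}=0$ for $i\notin X$ and $v_{j}=0$ for $j\notin\overline{X}$). With $A_{11}:=A[X]$ and $A_{22}:=A[\overline{X}]$, skew-symmetry gives
\[
A=D+uv^{t}-vu^{t},\qquad Inv(X,A)=\widetilde{D}+uv^{t}-vu^{t},
\]
where $D$ is the block-diagonal matrix carrying $A_{11}$ on $X$ and $A_{22}$ on $\overline{X}$, and $\widetilde{D}$ is obtained from $D$ by replacing $A_{11}$ with $-A_{11}$. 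Thus $A$ and $Inv(X,A)$ differ only through the sign of the diagonal block indexed by $X$.

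Next I would compute $\det A$ by applying the rank-one update formula $\det(N+xy^{t})=\det N+y^{t}\operatorname{adj}(N)x$ twice, first to evaluate $\det(D+uv^{t})$ and then to peel off the term $-vu^{t}$. The correction term $v^{t}\operatorname{adj}(D)u$ vanishes: the adjugate of a block-diagonal matrix is again block-diagonal, so it provides no coupling between $X$ and $\overline{X}$, whereas $u$ and $v$ are supported on complementary index sets. For the remaining term, $D+uv^{t}$ is block upper-triangular with respect to $(X,\overline{X})$, so its adjugate is block upper-triangular with off-diagonal block $-\operatorname{adj}(A_{11})\,(uv^{t})\,\operatorname{adj}(A_{22})$; pairing this against $u$ and $v$ collapses it to a product of two quadratic forms. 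The outcome is the clean identity
\[
\det A=\det(A_{11})\det(A_{22})+\bigl(u^{t}\operatorname{adj}(A_{11})u\bigr)\bigl(v^{t}\operatorname{adj}(A_{22})v\bigr),
\]
and the very same computation applied to $Inv(X,A)$ replaces $A_{11}$ by $-A_{11}$ throughout.

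To conclude I would invoke two standard facts about an $m\times m$ skew-symmetric matrix $S$: first, $\operatorname{adj}(-S)=(-1)^{m-1}\operatorname{adj}(S)$; second, $\operatorname{adj}(S)$ is skew-symmetric when $m$ is even but symmetric when $m$ is odd, in which latter case moreover $\det S=0$. Applying these with $S=A_{11}$ and comparing the two instances of the displayed identity: if $p$ is even then $\operatorname{adj}(A_{11})$ is skew-symmetric, so $u^{t}\operatorname{adj}(A_{11})u=0$ and both determinants equal $\det(A_{11})\det(A_{22})$; if $p$ is odd then $\det(A_{11})=\det(-A_{11})=0$ while $\operatorname{adj}(-A_{11})=\operatorname{adj}(A_{11})$, so the surviving quadratic-form terms coincide. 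In either case $\det(Inv(X,A))=\det A$, as required.

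The main obstacle is exactly this parity dichotomy: the correction $u^{t}\operatorname{adj}(A_{11})u$ carries the entire determinant when $\left\vert X\right\vert$ is odd yet is identically zero when $\left\vert X\right\vert$ is even, so neither case alone is representative and the argument must carry both at once, using the sign rule $\operatorname{adj}(-A_{11})=(-1)^{p-1}\operatorname{adj}(A_{11})$ to reconcile them. Equivalently, one can package the computation by introducing the matrix $A(t)$ equal to $A$ but with diagonal block scaled to $tA_{11}$: the identity shows $\det A(t)$ is a single monomial of even degree in $t$, whence $\det A(1)=\det A(-1)$.
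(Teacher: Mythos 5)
Your argument is correct, and it takes a genuinely different route from the paper. The paper works with the characteristic polynomial: for $|\lambda|$ large it factors $\det(A+\lambda I_n)$ through the Schur complement of $A_{11}+\lambda I_p$, and the whole proof hinges on the observation that the scalar $\beta^{t}(A_{11}+\lambda I_p)^{-1}\beta$ equals its own transpose $\beta^{t}(-A_{11}+\lambda I_p)^{-1}\beta$, while $\det(A_{11}+\lambda I_p)=\det(-A_{11}+\lambda I_p)$; this yields the stronger conclusion that $A$ and $Inv(X,A)$ have the same characteristic polynomial, with no case distinction. You instead apply the matrix determinant lemma twice to the decomposition $A=D+uv^{t}-vu^{t}$ and arrive at the explicit closed form
\[
\det A=\det(A_{11})\det(A_{22})+\bigl(u^{t}\operatorname{adj}(A_{11})u\bigr)\bigl(v^{t}\operatorname{adj}(A_{22})v\bigr),
\]
which I have checked (including the sign coming from the off-diagonal block $-\operatorname{adj}(A_{11})(uv^{t})\operatorname{adj}(A_{22})$ of the adjugate of the block-triangular matrix $D+uv^{t}$). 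Your parity dichotomy on $p=|X|$ is handled correctly: for $p$ even the quadratic form in the skew-symmetric $\operatorname{adj}(A_{11})$ vanishes, for $p$ odd the determinant term vanishes and $\operatorname{adj}(-A_{11})=\operatorname{adj}(A_{11})$, and the closing remark that $\det A(t)$ is a single monomial of even degree is a nice way to package both cases at once. What your version buys is an explicit formula and a completely elementary argument with no appeal to the spectral radius or to invertibility of $A_{11}+\lambda I_p$ (the adjugate identities you use for possibly singular blocks are standard polynomial identities); what the paper's version buys is the stronger statement about the full characteristic polynomial and a computation with no case analysis. Two small points worth a sentence each in a polished write-up: the degenerate cases $X=\emptyset$ and $X=[n]$ (where one of the blocks is empty) should be dispatched separately, and you should note that $A[\overline{X},X]=-vu^{t}$ is forced by skew-symmetry, so only one of the two rank conditions in the definition of an HL-clan is actually needed.
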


\begin{proof}
Without loss of generality, we can assume that $X=\left \{  1,\ldots,p\right \}
$. We will show that $A$ and $Inv(X,A)$ have the same characteristic
polynomial. As $X$ is an HL-clan of $A$, the submatrix $A[\overline{X},X]$ has
rank at most $1$ and hence there are two column vectors $\alpha=\left(
\begin{array}
[c]{c}%
\alpha_{p+1}\\
\vdots \\
\alpha_{n}%
\end{array}
\right)  $ and $\beta=\left(
\begin{array}
[c]{c}%
\beta_{1}\\
\vdots \\
\beta_{p}%
\end{array}
\right)  $ such that $A[\overline{X},X]=\alpha \beta^{t}$.

Let $A[X]:=A_{11}$ and $A[\overline{X}]:=A_{22}$. Then $A=\left(
\begin{array}
[c]{cc}%
A_{11} & -\beta \alpha^{t}\\
\alpha \beta^{t} & A_{22}%
\end{array}
\right)  $ and $Inv(X,A)=\left(
\begin{array}
[c]{cc}%
A_{11}^{t} & -\beta \alpha^{t}\\
\alpha \beta^{t} & A_{22}%
\end{array}
\right)  $, where $A_{11}^{t}=$ $-A_{11}$. We will prove that $A$ and
$Inv(X,A)$ have the same characteristic polynomial.

Let $\lambda$ satisfying $\left \vert \lambda \right \vert >\lambda_{0}$ where
$\lambda_{0}$ is the spectral radius of $A_{11}$. Then $A_{11}+\lambda I_{p}$
is nonsingular and hence, by using the Schur complement, we have

$%
\begin{array}
[c]{ccl}%
\det(A+\lambda I_{n}) & = & \det(A_{11}+\lambda I_{p})\det(A_{22}+\lambda
I_{n-p}+\alpha \beta^{t}(A_{11}+\lambda I_{p})^{-1}\beta \alpha^{t})\\
& = & \det(A_{11}+\lambda I_{p})\det(A_{22}+\lambda I_{n-p}+(\beta^{t}%
(A_{11}+\lambda I_{p})^{-1}\beta)\alpha \alpha^{t})\\
& = & \det((A_{11}+\lambda I_{p})^{t})\det(A_{22}+\lambda I_{n-p}+(\beta
^{t}(A_{11}+\lambda I_{p})^{-1}\beta)^{t}\alpha \alpha^{t})\\
& = & \det((A_{11}^{t}+\lambda I_{p}))\det(A_{22}+(\beta^{t}(A_{11}%
^{t}+\lambda I_{p})^{-1}\beta)\alpha \alpha^{t})\\
& = & \det(Inv(X,A)+\lambda I_{n})
\end{array}
$\  \  \  \  \  \  \  \  \  \  \  \  \  \

It follows that $A$ and $Inv(X,A)$ have the same characteristic polynomial and
then $\det(A)=\det(Inv(X,A))$.
\end{proof}

The following \ Collorary is a direct consequence of the previous Proposition
and Fact \ref{f3}.

\begin{cor}
\label{same pp}Let $A=[a_{ij}]$ be a skew-symmetric $n\times n$ matrix. If $X$
is an HL-clan of $A$ then $Inv(X,A)$ and $A$ have the same principal minors.
\end{cor}


\section{ Digraphs and orientation of a graph}
We start with some definitions about digraphs. A \emph{directed graph} or
\emph{digraph} $\Gamma$ consists of a nonempty finite set $V$ of
\emph{vertices} together with a (possibly empty) set $E$ of ordered pairs of
distinct vertices called \emph{arcs}. Such a digraph is denoted by $(V,E)$.
The \emph{converse }of a digraph $\Gamma$ denoted by $\Gamma^{\ast}$ is the
digraph obtained from $\Gamma$ by reversing the direction of all its arcs.

Let $\Gamma=(V,E)$ be a digraph and let $X$ be a subset of $V$. The
\emph{subdigraph} of $\Gamma$ \emph{induced} by $X$ is the digraph
$\Gamma \left[  X\right]  $ whose vertex set is $X$ and whose arc set consists
of all arc of $\Gamma$ which have end-vertices in $X$.

Two digraphs $\Gamma=(V,E)$ and $\Gamma^{\prime}=(V^{\prime},E^{\prime})$ are
said to be \emph{isomorphic} if there is a bijection $\varphi$ from $V$ onto
$V^{\prime}$ which preserves arcs, that is $(x,y)\in E$ if and only if
$(\varphi(x),\varphi(y))\in E^{\prime}$. Any such bijection is called an
\emph{isomorphism}. We say that $\Gamma$ and $\Gamma^{\prime}$ are
\emph{hemimorphic},\emph{ }if there exists an isomorphism from $\Gamma$ onto
$\Gamma^{\prime}$ or from $\Gamma^{\ast}$ onto $\Gamma^{\prime}$.

Let $\Gamma=(V,E)$ be a digraph. Following \cite{EHR ROZ}, a subset $X$ of $V$
is a \emph{clan} of $\Gamma$ if for any $a,b\in X$ and $x\in V\diagdown X$,
$(a,x)\in E$ (resp. $(x,a)\in E)$) if and only if $(b,x)\in E$ (resp.
$(x,b)\in E$). For a subset $X$ of $V$, we denote by $Inv(X,\Gamma)$ the
digraph obtained from $\Gamma$ by reversing all arcs of $\Gamma \left[
X\right]  $. Clearly, $Inv(X,Inv(X,\Gamma))=\Gamma$ and moreover, if $X$ is a
clan of $\Gamma$ then $X$ is a clan of $Inv(X,\Gamma)$.

Let $G=\left(  V,E\right)  $ be a simple graph (without loops and multiple
edge). An\emph{\ orientation} of $G$ is an assignment of a direction to each
edge of $G$ in order to obtain an directed graph $\overrightarrow{G}$. For
$x\neq y\in V$, $x\overset{\overrightarrow{G}}{\rightarrow}y$ means $(x,y)$ is
an arc of $\overrightarrow{G}$. For $Y\subseteq V$ and $x\in V\diagdown X$,
$x\overset{\overrightarrow{G}}{\rightarrow}Y$ means $x\overset{\overrightarrow
{G}}{\rightarrow}y$ for every $y\in Y$.

\begin{rem}
\label{fourcase}$\left.  {}\right.  $

\begin{description}
\item[i)] There are exactly four possible simple graphs with three vertices:
the complete graph $K_{3}$, the path $P_{2}$, the complement of these two
graphs, namely $\overline{K_{3}}$ and $\overline{P_{2}}$ (see Figure \ref{fig1});

\item[ii)] The path $P_{2}$ has two non-hemimorphic orientations $\Gamma_{1}$
and $\Gamma_{2}$ (see Figure \ref{fig2} (a));

\item[iii)] The complete graph $K_{3}$ has two non-hemimorphic orientations
$\Gamma_{3}$ and $\Gamma_{4}$ (see Figure \ref{fig2} (b)).
\end{description}
\end{rem}


\begin{figure}[h]
\begin{center}
\setlength{\unitlength}{0.7cm}
\begin{picture}(30,4)

\put(0,2){\circle*{.2}}
\put(1.5,4){\circle*{.2}}
\put(3,2){\circle*{.2}}
\put(1.3,1){$\overline{K_{3}}$}
\put(6,2){\circle*{.2}}
\put(7.5,4){\circle*{.2}}
\put(9,2){\circle*{.2}}
\thicklines
\put(6,2){\line(3,4){1.5}}
\put(7.3,1){$\overline{P_{2}}$}
\put(12,2){\circle*{.2}}
\put(13.5,4){\circle*{.2}}
\put(15,2){\circle*{.2}}
\thicklines
\put(12,2){\line(1,0){3}}
\put(12,2){\line(3,4){1.5}}
\put(13.1,1){$P_{2}$}
\put(18,2){\circle*{.2}}
\put(19.5,4){\circle*{.2}}
\put(21,2){\circle*{.2}}
\thicklines
\put(18,2){\line(1,0){3}}
\put(18,2){\line(3,4){1.5}}
\put(19.5,4){\line(3,-4){1.5}}
\put(19.1,1){$K_{3}$}

\end{picture}
\end{center}
\caption{ }
\label{fig1}
\end{figure}


\begin{figure}[h]
\begin{center}
\setlength{\unitlength}{0.7cm}
\begin{picture}(30,4)

\put(0,2){\circle*{.2}}
\put(1.5,4){\circle*{.2}}
\put(3,2){\circle*{.2}}
\thicklines
\put(0,2){\line(1,0){3}}
\put(0,2){\vector(1,0){1.8}}
\put(0,2){\line(3,4){1.5}}
\put(0,2){\vector(3,4){.9}}
\put(1.3,1){$\Gamma_{1}$}
\put(5,2){\circle*{.2}}
\put(6.5,4){\circle*{.2}}
\put(8,2){\circle*{.2}}
\thicklines
\put(5,2){\line(1,0){3}}
\put(8,2){\vector(-1,0){1.8}}
\put(5,2){\line(3,4){1.5}}
\put(5,2){\vector(3,4){.9}}
\put(6.3,1){$\Gamma_{2}$}
\put(3.7,0.2){(a)}
\put(12,2){\circle*{.2}}
\put(13.5,4){\circle*{.2}}
\put(15,2){\circle*{.2}}
\thicklines
\put(12,2){\line(1,0){3}}
\put(12,2){\vector(1,0){1.8}}
\put(12,2){\line(3,4){1.5}}
\put(13.5,4){\vector(-3,-4){.9}}
\put(15,2){\line(-3,4){1.5}}
\put(15,2){\vector(-3,4){.9}}
\put(13.1,1){$\Gamma_{3}$}
\put(17,2){\circle*{.2}}
\put(18.5,4){\circle*{.2}}
\put(20,2){\circle*{.2}}
\thicklines
\put(17,2){\line(1,0){3}}
\put(20,2){\vector(-1,0){1.8}}
\put(17,2){\line(3,4){1.5}}
\put(18.5,4){\vector(-3,-4){.9}}
\put(18.5,4){\line(3,-4){1.5}}
\put(20,2){\vector(-3,4){.9}}
\put(18.1,1){$\Gamma_{4}$}
\put(15.7,0.2){(b)}

\end{picture}
\end{center}
\caption{}\label{fig2}
\end{figure}

The proof of our main theorem is based on a result of Boussa\"{\i}ri et al
\cite{BILT} about the relationship between hemimorphy and clan decomposition
of digraphs. Proposition \ref{Biltdigraph} below is a special case of this result.

\begin{prop}
\label{Biltdigraph}Let $G=(V,E)$ be a finite simple graph and let $G^{\sigma}%
$, $G^{\tau}$ be two orientations of $G$. Then the following statements are equivalent:

\begin{description}
\item[i)] $G^{\sigma}[X]$ and $G^{\tau}[X]$ are hemimorphic, for any subset
$X$ of $V$ of size $3$;

\item[ii)] There exists a sequence $\sigma_{0}=\sigma,\ldots,\sigma_{m}=\tau$
of orientations of $G$ such that for $i=0,\ldots,m-1$, $G^{\sigma_{i+1}%
}=Inv(X_{i},G^{\sigma_{i}})$ where $X_{i}$ is a clan of $G^{\sigma_{i}}$.
\end{description}
\end{prop}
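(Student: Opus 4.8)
The plan is to prove the two implications separately. I would record at the outset that \emph{hemimorphy} is an equivalence relation on digraphs: the composite of two maps, each of which is an isomorphism or an isomorphism onto the converse, is again of one of these two types, so the relation is reflexive, symmetric and transitive. This transitivity is the only structural fact I need for the easy direction.

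For (ii)$\Rightarrow$(i), by transitivity of hemimorphy it suffices to treat a single reversal $G^{\sigma_{i+1}}=Inv(X_{i},G^{\sigma_{i}})$ with $X_{i}$ a clan of $G^{\sigma_{i}}$. First I would record the digraph analogue of Fact \ref{f3}: if $C$ is a clan of a digraph $\Gamma$ and $X\subseteq V$, then $C\cap X$ is a clan of $\Gamma\lbrack X]$ and $Inv(C,\Gamma)[X]=Inv(C\cap X,\Gamma\lbrack X])$; both are immediate from the definitions of a clan and of $Inv$. Fixing a $3$-subset $X$ and writing $C:=X_{i}\cap X$ (a clan of $G^{\sigma_{i}}[X]$), it remains only to check that reversing a clan of a $3$-vertex digraph $D$ yields a hemimorphic digraph. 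If $|C|\leq 1$ there is no arc inside $C$, so $Inv(C,D)=D$; if $C$ is the whole vertex set, $Inv(C,D)=D^{\ast}$, which is hemimorphic to $D$ by definition; and if $|C|=2$, the transposition of the two vertices of $C$ (fixing the third) is an isomorphism from $D$ onto $Inv(C,D)$, since the clan condition forces the two vertices of $C$ to have identical arcs to the remaining vertex while the internal arc is exactly the one that gets reversed. Chaining these hemimorphies along the sequence gives (ii)$\Rightarrow$(i).

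For (i)$\Rightarrow$(ii) I would invoke the general result of \cite{BILT}, of which this Proposition is the announced special case. Condition (i) states precisely that $G^{\sigma}$ and $G^{\tau}$ are hemimorphic on every induced subdigraph of size $3$, which is the local hypothesis of that theorem specialized to orientations; and since reversing arcs never creates or destroys an edge of $G$, every clan reversal of an orientation of $G$ is again an orientation of $G$, so the sequence produced by the theorem automatically stays inside the class of orientations of $G$. Applying \cite{BILT} to the pair $G^{\sigma},G^{\tau}$ (two digraphs on $V$ sharing the same symmetric adjacency relation) then yields the required sequence $\sigma_{0}=\sigma,\ldots,\sigma_{m}=\tau$ of clan reversals.

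The main obstacle is exactly (i)$\Rightarrow$(ii): the hypothesis only constrains triples, and passing from this purely local information to a single global sequence of clan reversals is the difficulty resolved in \cite{BILT}. A self-contained argument would proceed by induction on $|V|$ through the modular (clan) decomposition of $G^{\sigma}$: at a prime node the orientation is rigid up to passing to the converse, so $\sigma$ and $\tau$ can be aligned there after at most one full reversal, while at a non-prime node one recurses into the blocks and reverses the corresponding clans. Reconstructing this decomposition machinery is precisely what \cite{BILT} supplies, so I would simply cite it rather than reprove it here.
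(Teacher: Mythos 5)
Your proposal is correct and matches the paper's treatment: the paper offers no proof of this proposition at all, simply asserting it is a special case of the result of Boussa\"{\i}ri, Ille, Lopez and Thomass\'{e} \cite{BILT}, which is exactly how you handle the hard direction (i)$\Rightarrow$(ii). Your explicit verification of (ii)$\Rightarrow$(i) via transitivity of hemimorphy and the case analysis of clan reversals on three-vertex digraphs is sound and is simply left implicit in the paper.
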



\section{Proof of Main theorem}
Let $G=(V,E)$ be a graph whose vertices are $v_{1},v_{2},\ldots,v_{n}$.
An\emph{\ }orientation of $G$ can be seen as a skew-symmetric map $\sigma$
from $V\times V$ to the set $\{0,1,-1\}$ such that $\sigma(i,j)=1$ if an only
if $(v_{i},v_{j})$ is an arc. Such orientation is denoted by $G^{\sigma}$.

Let $G^{\sigma}$ be an orientation of $G$. The \emph{skew-adjacency} matrix of
$G^{\sigma}$ is the real skew-symmetric matrix $S(G^{\sigma})=\left[
s_{i,j}\right]  $ where $s_{i,j}=1$ and $s_{j,i}=-1$ if $\left(  i,j\right)  $
is an arc of $G^{\sigma}$, otherwise $s_{i,j}=s_{j,i}=0$. Clearly, the entries
of $S(G^{\sigma})$ depend on the ordering of vertices. But the value of the
determinant $\det(S(G^{\sigma}))$ is independent of this ordering. So, we can
write $\det(G^{\sigma})$ instead of $\det(S(G^{\sigma}))$.

Consider now a skew-symmetric $\left \{  -1,0,1\right \}  $-matrix $A$. We
associate to $A$ its \emph{underlying graph} $G$ with vertex set $\left[
n\right]  $ and such that $\left \{  i,j\right \}  $ is a edge of $G_{\text{ }}%
$iff $a_{ij}\neq0$. Let $\sigma$ be the map from $\left[  n\right]
\times \left[  n\right]  $ to the set $\{0,1,-1\}$ such that $\sigma
(i,j)=a_{ij}$. Clearly, $G^{\sigma}$ is the unique orientation of $G$ such
that $S(G^{\sigma})=A$.

\begin{rem}
\label{correspond invers clan and HL clan} Let $G$ $=(\left[  n\right]  ,E)$
be a graph and let $G^{\sigma}$ be an orientation of $G$. Then:

\begin{description}
\item[i)] For every subset $X$ of $\left[  n\right]  $, we have
$S(Inv(X,G^{\sigma}))=Inv(X,S(G^{\sigma}))$;

\item[ii)] $Inv(\left[  n\right]  ,G^{\sigma})=(G^{\sigma})^{\ast}=G^{-\sigma
}$;

\item[iii)] Every clan of $G^{\sigma}$ is an HL-clan of $S(G^{\sigma})$.
\end{description}
\end{rem}

In addition to Corollary \ref{same pp}, the proof of our Main Theorem requires
the following Lemma.

\begin{lem}
\label{det4impliq3hem} Given a graph $G$ with four vertices $i,j,k,l$ such
that $i$ is adjacent to $j,k,l$. Let $G^{\sigma}$, $G^{\tau}$ be two
orientations of $G$. If $i\overset{G^{\sigma}}{\rightarrow}\left \{
j,k,l\right \}  $ , $i\overset{G^{\tau}}{\rightarrow}\left \{  j,k,l\right \}  $
and $\det(G^{\sigma})=\det(G^{\tau})$ then $G^{\sigma}\left[  j,k,l\right]  $
and $G^{\tau}\left[  j,k,l\right]  $ are hemimorphic.
\end{lem}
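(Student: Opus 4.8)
The plan is to reduce everything to the Pfaffian of the $4\times 4$ skew-adjacency matrix and to read off the hemimorphy class of the induced orientation on $\{j,k,l\}$ from the value of the determinant. First I would order the vertices as $i,j,k,l$ and write $S(G^{\sigma})=[s_{pq}]$. Since for a $4\times 4$ skew-symmetric matrix one has $\det=\mathrm{Pf}^{2}$ with $\mathrm{Pf}(S)=s_{ij}s_{kl}-s_{ik}s_{jl}+s_{il}s_{jk}$, and since the hypothesis $i\overset{G^{\sigma}}{\rightarrow}\{j,k,l\}$ forces $s_{ij}=s_{ik}=s_{il}=1$, this collapses to
\[
\det(G^{\sigma})=\bigl(s_{jk}-s_{jl}+s_{kl}\bigr)^{2},
\]
where the triple $(s_{jk},s_{jl},s_{kl})$ encodes exactly the induced orientation $G^{\sigma}[j,k,l]$, a coordinate being $0$ precisely when the corresponding edge is absent in $G$. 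The same identity holds for $\tau$.

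The key observation is that the underlying graph $G[j,k,l]$ is one and the same for both orientations, so it suffices to show, separately for each of the four possible shapes of $G[j,k,l]$ listed in Remark~\ref{fourcase}~i), that the scalar $s_{jk}-s_{jl}+s_{kl}$ (equivalently its square, the determinant) is a \emph{complete invariant of the hemimorphy class} of the induced orientation. Granting this, $\det(G^{\sigma})=\det(G^{\tau})$ places $G^{\sigma}[j,k,l]$ and $G^{\tau}[j,k,l]$ in the same class, which is exactly the desired hemimorphy.

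I would then run the case analysis. When $G[j,k,l]$ is $\overline{K_{3}}$ or $\overline{P_{2}}$ there is at most one edge among $j,k,l$, so the two induced orientations are trivially hemimorphic (and the determinant is constant). When $G[j,k,l]=P_{2}$, a short check of the three choices of centre shows that $s_{jk}-s_{jl}+s_{kl}=\pm 2$ (determinant $4$) exactly for the two orientations in which the arcs are aligned through the centre, and $=0$ (determinant $0$) exactly for the two orientations having a source or a sink at the centre; by Remark~\ref{fourcase}~ii) these are precisely the two hemimorphy classes $\Gamma_{1},\Gamma_{2}$. When $G[j,k,l]=K_{3}$, the scalar is a sum of three odd numbers, hence $\pm 1$ or $\pm 3$; it equals $\pm 3$ (determinant $9$) exactly when $s_{jk}=s_{kl}=-s_{jl}$, i.e. for the two cyclic orientations (directed $3$-cycles), and $\pm 1$ (determinant $1$) for the six transitive ones, matching the two classes $\Gamma_{3},\Gamma_{4}$ of Remark~\ref{fourcase}~iii).

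The routine part is the Pfaffian computation; the real content, and the step I expect to take the most care, is the bookkeeping in the $P_{2}$ and $K_{3}$ cases: verifying that the two admissible values of $\lvert s_{jk}-s_{jl}+s_{kl}\rvert$ line up exactly with the two hemimorphy classes of the corresponding three-vertex digraph, and that every orientation within a class yields the same determinant irrespective of how the arcs are distributed among $j,k,l$. Once that correspondence is pinned down, the equality of determinants forces membership in a common class, and hence hemimorphy of $G^{\sigma}[j,k,l]$ and $G^{\tau}[j,k,l]$.
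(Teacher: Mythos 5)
Your proposal is correct and follows essentially the same route as the paper: a case analysis over the four possible shapes of $G[j,k,l]$ from Remark \ref{fourcase}, showing that within each shape the determinant of the $4\times 4$ matrix separates the two hemimorphy classes (values $4$ versus $0$ for $P_{2}$, $9$ versus $1$ for $K_{3}$). The only difference is presentational: you derive these values explicitly from the Pfaffian identity $\det = (s_{jk}-s_{jl}+s_{kl})^{2}$, whereas the paper simply asserts them.
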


\begin{proof}
By remark \ref{fourcase}, we have four cases to consider.

\begin{description}
\item[i)] If $G\left[  j,k,l\right]  $ is the empty graph then $G^{\tau
}\left[  j,k,l\right]  =G^{\sigma}\left[  j,k,l\right]  $.

\item[ii)] If $G\left[  j,k,l\right]  $ is the graph $\overline{P_{2}}$ then
$G^{\tau}\left[  j,k,l\right]  =G^{\sigma}\left[  j,k,l\right]  $ or $G^{\tau
}\left[  j,k,l\right]  =(G^{\sigma}\left[  j,k,l\right]  )^{\ast}$.

\item[iii)] If $G\left[  j,k,l\right]  $ is the path $P_{2}$ and $G^{\sigma
}\left[  j,k,l\right]  $ is hemimorphic to $\Gamma_{1}$ then $\det(G^{\sigma
})=4$ and $G^{^{\tau}}\left[  j,k,l\right]  $ is hemimorphic to $\Gamma_{1}$
or $\Gamma_{2}$. The case when $G^{^{\tau}}\left[  j,k,l\right]  $ is
hemimorphic to $\Gamma_{2}$ implies that $\det(G^{\tau})=0$, which is
impossible. Analougsly, if $G^{\sigma}\left[  j,k,l\right]  $ is hemimorphic
to $\Gamma_{2}$ then $G^{^{\tau}}\left[  j,k,l\right]  $ must be hemimorphic
to $\Gamma_{2}$.

\item[iv)] If $G\left[  j,k,l\right]  $ is the complete graph $K_{3}$ and
$G^{\sigma}\left[  j,k,l\right]  $ is hemimorphic to $\Gamma_{3}$ then
$\det(G^{\sigma})=9$ and $G^{^{\tau}}\left[  j,k,l\right]  $ is hemimorphic to
$\Gamma_{3}$ or $\Gamma_{4}$. As in iii), the case when $G^{^{\tau}}\left[
j,k,l\right]  $ is hemimorphic to $\Gamma_{4}$ implies that $\det(G^{\tau}%
)=1$, which is impossible. Analougsly, if $G^{\sigma}\left[  j,k,l\right]  $
is hemimorphic to $\Gamma_{4}$ then $G^{^{\tau}}\left[  j,k,l\right]  $ must
be hemimorphic to $\Gamma_{4}$.
\end{description}
\end{proof}

\begin{pot}
 The implication ii)$\Longrightarrow$i) is obvious. To
prove iii)$\Longrightarrow$ii), it suffices to apply Corollary \ref{same pp}.
Let us prove that i) implies iii). As all off-diagonal entries of the first
row in $A$ and $B$ are non zeros, then there are two $\{-1,1\}$-diagonal
matrices $D$ and $D^{\prime}$ such that the first row of $A^{\prime}%
:=D^{-1}AD$ (resp $B^{\prime}:=D^{\prime-1}BD^{\prime}$) is $(0,1,1,\ldots
,1)$. By construction, $A^{\prime}$ and $B^{\prime}$ have the same underlying
graph $G$. Let $G^{\sigma}$ (resp. $G^{\tau}$) be the unique orientation of
$G$ such that $S(G^{\sigma})=A^{\prime}$ (resp. $S(G^{\tau})=B^{\prime}$). We
will show that i) of Proposition \ref{Biltdigraph} hold for $G^{\sigma}$ and
$G^{\tau}$. For this, let $X=\left \{  j,k,l\right \}  $ be a subset of $\left[
n\right]  $ of size $3$. If $1\in X$ (for example $j=1$), then $1\overset
{G^{\sigma}}{\rightarrow}\left \{  k,l\right \}  $, $1\overset{G^{\tau}%
}{\rightarrow}\left \{  k,l\right \}  $ and hence $G^{\sigma}\left[
j,k,l\right]  $ is isomorphic to $G^{\tau}\left[  j,k,l\right]  $. Assume now
that $1\notin X$ and let $Y:=\left \{  1,j,k,l\right \}  $. We have
$1\overset{G^{\sigma}}{\rightarrow}\left \{  j,k,l\right \}  $ and
$1\overset{G^{\tau}}{\rightarrow}\left \{  j,k,l\right \}  $. Moreover, as $A$
and $A^{\prime}$ (resp. $B$ and $B^{\prime}$) are digonally similar, we have
$\det(A^{\prime}\left[  Y\right]  )=\det(A\left[  Y\right]  )$, $\det
(B^{\prime}\left[  Y\right]  )=\det(B\left[  Y\right]  )$ and hence
$\det(A^{\prime}\left[  Y\right]  )=\det(B^{\prime}\left[  Y\right]  )$
because $A$ and $B$ have equal corresponding principal minors of order $4$.
Now, by definition, we have $\det(G^{\sigma}\left[  Y\right]  )=\det
(A^{\prime}\left[  Y\right]  )$ and $\det(G^{\tau}\left[  Y\right]
)=\det(B^{\prime}\left[  Y\right]  )$. It follows that $\det(G^{\sigma}\left[
Y\right]  )=\det(G^{\tau}\left[  Y\right]  )$ and then by Lemma
\ref{det4impliq3hem}, $G^{\sigma}\left[  j,k,l\right]  $ and $G^{\tau}\left[
j,k,l\right]  $ are hemimorphic. Now, from Proposition \ref{Biltdigraph},
there exists a sequence of orientations $\sigma_{0}=\sigma,\ldots,\sigma
_{m}=\tau$ of $G$ such that for $i=0,\ldots,m-1$, $G^{\sigma_{i+1}}%
=Inv(X_{i},G^{\sigma_{i}})$ where $X_{i}$ is a clan of $G^{\sigma_{i}}$. Let
$A_{i}^{\prime}:=S(G^{\sigma_{i}})$ for $i=0,\ldots,m$. By Remark
\ref{correspond invers clan and HL clan}, $X_{i}$ is is an HL-clan of
$A_{i}^{\prime}$ and $A_{i+1}^{\prime}=Inv(X_{i},A_{i}^{\prime})$ for
$i=0,\ldots,m-1$. We conclude by applying Proposition \ref{DSTimpliHLRE}%
.$\allowbreak$
\end{pot}


\end{document}